\newtheorem{thm}{Theorem}
\theoremstyle{definition}
\newtheorem{rem}[thm]{Remark}
\newtheorem{exm}[thm]{Example}
\def\R{\mathcal{R}}
\def\L{\mathfrak{L}}
\journal{LAA}
\begin{document}

\begin{frontmatter}

\title{Yet another criterion for the total positivity of Riordan arrays}
\author[a]{Jianxi Mao\corref{cor1}}
\ead{maojianxi@hotmail.com}
\author[b]{Lili Mu\corref{cor2}}
\ead{lilimu@jsnu.edu.cn}
\cortext[cor2]{Corresponding author.}
\author[a]{Yi Wang\corref{cor3}}
\ead{wangyi@dlut.edu.cn}
\address[a]{School of Mathematical Sciences, Dalian University of Technology, Dalian 116024, P.R. China}
\address[b]{School of Mathematics and Statistics, Jiangsu Normal University, Xuzhou 221116, P.R. China}

\begin{abstract}
Let $R=\R(d(t),h(t))$ be a Riordan array,
where $d(t)=\sum_{n\ge 0}d_nt^n$ and $h(t)=\sum_{n\ge 0}h_nt^n$.
We show that if the matrix
\begin{equation*}
\left[\begin{array}{ccccc}
d_0 & h_0 & 0 & 0 &\cdots\\
d_1 & h_1 & h_0 & 0 &\\
d_2 & h_2 & h_1 & h_0 &\\
\vdots&\vdots&&&\ddots
\end{array}\right]
\end{equation*}
is totally positive,
then so is the Riordan array $R$.
\end{abstract}

\begin{keyword}
Riordan array\sep Totally positive matrix\sep P\'olya frequency sequence
\MSC[2010] 15B48\sep 15B36\sep 15B05
\end{keyword}

\end{frontmatter}


Following Karlin~\cite{Kar68},
an infinite matrix is called {\it totally positive} (or shortly, TP),
if its minors of all orders are nonnegative.
An infinite nonnegative sequence $(a_n)_{n\ge0}$ is called a {\it P\'olya frequency sequence}
(or shortly, PF), if its Toeplitz matrix
$$
[a_{i-j}]_{i,j\ge0}=
\left[
    \begin{array}{ccccc}
        a_0 \\
        a_1    & a_0 \\
        a_2    & a_1 & a_0 \\
        a_3    & a_2 & a_1 & a_0 \\
        \vdots &     &     &     & \ddots \\
    \end{array}
\right]
$$
is TP.
We say that a finite sequence $a_0,a_1,\ldots,a_n$ is PF
if the corresponding infinite sequence $a_0,a_1,\ldots,a_n,0,\ldots$ is PF.
A fundamental characterization for PF sequences is due to Schoenberg and Edrei,
which states that
a sequence $(a_n)_{n\ge 0}$ is PF if and only if its generating function
\begin{equation*}\label{pf-c}
\sum_{n\ge 0}a_nt^n=Ct^ke^{\gamma t} \frac{\prod_{j\ge0} (1+\alpha_j t)}{\prod_{j\ge0} (1-\beta_j t)},
\end{equation*}
where $C>0, k\in\mathbb{N}, \alpha_j,\beta_j,\gamma\ge 0$, and $\sum_{j\ge0} (\alpha_j+\beta_j)<+\infty$
(see~\cite[p. 412]{Kar68} for instance).
In this case, the generating function is called a {\it P\'olya frequency formal power series}. 
We refer the reader to~\cite{And87,FJ11,Pin10,Slo20,WY18} for the total positivity of matrices.
Our concern in this note is the total positivity of Riordan arrays.

Riordan arrays play an important unifying role in enumerative combinatorics
\cite{SGWW91,Spr94}.
Let $d(t)=\sum_{n\ge 0}d_nt^n$ and $h(t)=\sum_{n\ge 0}h_nt^n$
be two formal power series.
A {\it Riordan array}, denoted by $\R(d(t),h(t))$,
is an infinite matrix
whose generating function of the $k$th column is $d(t)h^k(t)$ for $k\ge 0$.
Chen and Wang~\cite[Theorem 2.1]{CW19} gave the following criterion for the total positivity of Riordan arrays.

\begin{thm}[{\cite[Theorem 2.1]{CW19}}]\label{cw-c}
Let $R=(d(t),h(t))$ be a Riordan array.
If both $d(t)$ and $h(t)$ are P\'olya frequency formal power series,
then $R$ is totally positive.
\end{thm}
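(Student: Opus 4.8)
My plan is to peel off the factor $d$ by a group identity and then reduce the whole theorem to a single combinatorial statement about powers of $h$, which I would attack with the Lindström--Gessel--Viennot (LGV) lemma.

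First I would use the fundamental product rule $\mathcal{R}(a,b)\,\mathcal{R}(c,e)=\mathcal{R}\big(a\cdot(c\circ b),\,e\circ b\big)$, which specializes to
\[
\mathcal{R}(d,h)=\mathcal{R}(d,t)\cdot\mathcal{R}(1,h).
\]
The left factor $\mathcal{R}(d,t)$ has $(i,j)$ entry $[t^i]d(t)t^j=d_{i-j}$, so it is exactly the Toeplitz matrix $[d_{i-j}]_{i,j\ge0}$, which is TP precisely because $d(t)$ is a P\'olya frequency power series. Since every minor of a product expands by the Cauchy--Binet formula into a sum of products of minors of the factors, and that sum is finite here (the lower-triangular factor $\mathcal{R}(d,t)$ forces the index sets to be bounded), a product of TP matrices is TP. Thus the theorem collapses to the core claim: if $h(t)$ is a P\'olya frequency power series, then the power array $\mathcal{R}(1,h)$, whose $(n,j)$ entry is $[t^n]h(t)^j$, is TP.

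For the core I would set up a lattice-path model. Writing $[t^n]h^j=\sum_{i_1+\cdots+i_j=n}h_{i_1}\cdots h_{i_j}$ exhibits the $(n,j)$ entry as the total weight of all walks taking exactly $j$ steps, where a step of ``size'' $i$ carries weight $h_i$ and the sizes sum to $n$. Because $(h_n)$ is PF, the Schoenberg--Edrei factorization furnishes explicit nonnegatively weighted planar gadgets for the elementary factors $1+\alpha t$, $1/(1-\beta t)$, and $e^{\gamma t}$, whose concatenation is a planar acyclic network $N$ realizing the Toeplitz matrix $T_h$; a single ``size-$i$ step'' is then itself such a gadget. Stacking $j$ copies of $N$, placing the source for power $j$ higher on the left boundary for larger $j$ and the sink for degree $n$ further right along the bottom for larger $n$, produces a planar network whose path matrix is exactly $\mathcal{R}(1,h)$. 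With the sources and sinks sitting on the boundary in compatible, non-crossing order, Lindström's lemma evaluates each minor as a nonnegative sum over vertex-disjoint path families, giving total positivity.

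I expect the genuine difficulty to lie wholly in this second step, and in two places. First, one must make the planar realization rigorous for a general PF series: only the factor $e^{\gamma t}=\lim_m(1+\gamma t/m)^m$ is a limiting gadget, so I would prove the core first when $h$ is a product of factors $1+\alpha t$ and $1/(1-\beta t)$ times a monomial $Ct^k$ (finite, manifestly planar networks), and then recover the general case by a coefficientwise limit, using that each entry $[t^n]h^j$ is a polynomial in finitely many coefficients of $h$ and that nonnegativity of minors is a closed condition. Second, and more delicate, is the orientation bookkeeping: I must verify that the chosen boundary placement of sources and sinks forces the identity matching, so that LGV contributes $+$ signs rather than cancelling terms. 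This sign analysis is the part where the argument can silently fail, and it is what I would write out in full detail.
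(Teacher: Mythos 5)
Your proposal is correct in outline, and its first half coincides exactly with the route this paper takes in the Remark following Theorem~\ref{mmw-c}: both start from the factorization $\R(d(t),h(t))=\R(d(t),t)\cdot\R(1,h(t))$, identify $\R(d(t),t)$ with the Toeplitz matrix $[d_{i-j}]$, which is TP by the very definition of a PF sequence, and multiply minors via Cauchy--Binet (your observation that lower triangularity of $\R(d,t)$ keeps the Cauchy--Binet sum finite is accurate). Where you genuinely diverge is the core lemma that $\R(1,h)$ is TP when $h$ is PF. The paper obtains this in two lines from its main result: for the Bell-type array $\R(h(t)/t,h(t))$ the Hessenberg matrix \eqref{dh-m} is precisely the Toeplitz matrix of the shifted sequence $(h_{n+1})_{n\ge0}$, hence TP since $h$ is PF, so Theorem~\ref{mmw-c} applies, and then
$\R(1,h)=\left[\begin{smallmatrix}1&0\\0&\R(h/t,h)\end{smallmatrix}\right]$.
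You instead propose a self-contained Lindstr\"om--Gessel--Viennot proof: planar gadgets for $Ct^k$, $1+\alpha t$ (bidiagonal) and $1/(1-\beta t)$ (staircase), stacked so that column $j$ of $\R(1,h)$ is the $0$th column of $T_h^j$, with boundary placement forcing the identity matching, and entrywise limits for $e^{\gamma t}$ and the infinite products, justified because total positivity is a closed condition and $[t^n]h^j$ involves only $h_0,\dots,h_n$. That plan is workable, and the two delicate points you flag (the limit step and the LGV sign bookkeeping) are real but standard. The trade-off: the paper's argument is far shorter and purely algebraic, riding on Theorem~\ref{mmw-c}, whose proof is a one-step induction on columns via \eqref{h-p}; but its block decomposition of $\R(1,h)$ as written requires $h(0)=0$, so the improper case $h_0\neq 0$ needs a separate remark, whereas your network argument applies verbatim when $h_0>0$ (size-zero steps are allowed) and moreover yields a combinatorial interpretation of every minor of $\R(1,h)$ as a nonnegative weight of vertex-disjoint path families.
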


We say that $\R(d(t),h(t))$ is {\it proper}
if $d_0\neq 0, h_0=0$ and $h_1\neq 0$.
In this case,
$\R(d(t),h(t))$ is an infinite lower triangular matrix.
It is well known that a proper Riordan array $R=[r_{n,k}]_{n,k\ge 0}$
can be characterized by two sequences
$(a_n)_{n\ge 0}$ and $(z_n)_{n\ge 0}$ such that
\begin{equation*}\label{rrr-c}
r_{0,0}=1,\quad r_{n+1,0}=\sum_{j\ge 0}z_jr_{n,j},\quad r_{n+1,k+1}=\sum_{j\ge 0}a_jr_{n,k+j}
\end{equation*}
for $n,k\ge 0$ (see \cite{,He15, HS09} for instance).
Call $(a_n)_{n\ge 0}$ and $(z_n)_{n\ge 0}$
the $A$- and $Z$-sequences of $R$ respectively.
Chen {\it et al.}~\cite[Theorem 2.1 (i)]{CLW15} gave the following criterion for the total positivity of Riordan arrays.

\begin{thm}[{\cite[Theorem 2.1 (i)]{CLW15}}]\label{clw-c}
Let $R$ be the proper Riordan array with the $A$- and $Z$-sequences $(a_n)_{n\ge 0}$ and $(z_n)_{n\ge 0}$.
If the product matrix
 \begin{equation*}\label{ra-pm}
P=\left[
\begin{array}{ccccc}
z_0 & a_0 & 0 & 0 &\cdots\\
z_1 & a_1 & a_0 & 0 &\\
z_2 & a_2 & a_1 & a_0 &\\
\vdots&\vdots&&&\ddots
\end{array}
\right]
\end{equation*}
is totally positive,
then so is $R$.
\end{thm}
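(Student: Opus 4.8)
The plan is to recognize $P$ as the \emph{production matrix} of $R$ and then transport total positivity from $P$ to $R$ by an induction powered by the Cauchy--Binet identity. The first step is to repackage the two defining recurrences as a single matrix relation. Write $\overline{R}$ for the matrix obtained from $R=[r_{n,k}]$ by deleting its top row, so that $\overline{R}_{n,k}=r_{n+1,k}$. Reading off column $0$ and column $k+1$ shows that the $Z$- and $A$-sequence recurrences
\[
r_{n+1,0}=\sum_{j\ge0}z_jr_{n,j},\qquad r_{n+1,k+1}=\sum_{j\ge0}a_jr_{n,k+j}
\]
amount precisely to the identity $\overline{R}=RP$. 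Because $R$ is lower triangular (the proper hypothesis $h_0=0$ forces the $k$th column $d(t)h(t)^k$ to begin at $t^k$) and $P$ is lower Hessenberg, every entry of $RP$ is a finite sum, so this identity is unambiguous.

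Next I would induct on $N=\max I$, where for finite index sets $I=\{i_1<\cdots<i_p\}$ and $J=\{j_1<\cdots<j_p\}$ I write $R[I\mid J]$ for the corresponding $p\times p$ minor and set $I-1=\{i-1:i\in I\}$. The boundary behaviour is governed by the top row of $R$, which equals $(1,0,0,\dots)$: if $0\in I$ then $R[I\mid J]=0$ unless $0\in J$, and when $0\in J$ a cofactor expansion along that row strips off row $0$ and column $0$, leaving a minor all of whose rows are positive and whose largest row index is still at most $N$. It therefore suffices to treat minors with $\min I\ge1$, for which $R[I\mid J]=\overline{R}[I-1\mid J]$; substituting $\overline{R}=RP$ and applying Cauchy--Binet gives
\[
\overline{R}[I-1\mid J]=\sum_{|K|=p}R[I-1\mid K]\,P[K\mid J].
\]
Here each factor $P[K\mid J]\ge0$ because $P$ is TP, each factor $R[I-1\mid K]\ge0$ by the induction hypothesis since $\max(I-1)=N-1$, and the sum is finite because $R$ is lower triangular; hence $R[I\mid J]\ge0$. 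The base case $N=0$ is immediate, as then $p=1$ and the minor is $r_{0,j_1}\in\{0,1\}$.

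The main obstacle I anticipate lies not in the induction itself but in securing its two structural inputs: the production-matrix identity $\overline{R}=RP$ and the legitimacy of Cauchy--Binet for these infinite matrices. One must confirm that $R$ is genuinely lower triangular and $P$ genuinely lower Hessenberg, so that all products and cofactor expansions involve only finitely many nonzero terms and no convergence question arises. A secondary, purely organizational point is that the row-$0$ reduction lowers the block size $p$ without lowering $N$; I would therefore phrase the hypothesis as ``every minor with largest row index below $N$ is nonnegative'' and apply the row-$0$ reduction at most once before invoking Cauchy--Binet, so that the descent is manifestly well-founded.
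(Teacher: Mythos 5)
Your proof is correct, but note that this paper contains no proof of Theorem~\ref{clw-c} at all: the result is quoted from \cite{CLW15}, and the only argument given in the paper is for its dual, Theorem~\ref{mmw-c}. What you have reconstructed is, in substance, the original production-matrix proof of \cite{CLW15}: the $A$- and $Z$-recurrences are packaged as $\overline{R}=RP$, and total positivity is transported from $P$ to $R$ row by row via Cauchy--Binet, with minors touching row $0=(1,0,0,\ldots)$ either vanishing or reducing by a cofactor expansion; your induction on $N=\max I$ is just induction on the number of leading rows. The paper's proof of Theorem~\ref{mmw-c} is the exact column-wise mirror of this: it encodes the columns through $R[n+1]=H\left[\begin{smallmatrix}1&0\\0&R[n]\end{smallmatrix}\right]$ and inducts on the number of leading columns. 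The comparison is instructive: your row-wise identity $\overline{R}=RP$ genuinely needs $R$ to be proper --- lower triangularity supplies the finitely supported rows that make $RP$ and the Cauchy--Binet sums finite, and the $A$- and $Z$-sequences exist only in that case --- whereas the column-wise identity behind Theorem~\ref{mmw-c} requires no properness, which is exactly why the paper's criterion handles improper arrays such as the Lucas matrix, where Theorem~\ref{clw-c} is silent. Two small touch-ups to your write-up: the finiteness of all sums comes from the finitely supported rows of the triangular $R$, not from $P$ being Hessenberg; and the top row of $R$ is in general $(d_0,0,0,\ldots)$ --- it equals $(1,0,0,\ldots)$ under the normalization $r_{0,0}=1$ built into the paper's statement of the $A$-, $Z$-characterization, and in any case a positive rescaling of row $0$ does not affect total positivity.
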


In this note we establish a new criterion for the total positivity of Riordan arrays,
which can be viewed as a dual version of Theorem \ref{clw-c} in a certain sense.

\begin{thm}\label{mmw-c}
Let $R=(d(t),h(t))$ be a Riordan array,
where $d(t)=\sum_{n\ge 0}d_nt^n$ and $h(t)=\sum_{n\ge 0}h_nt^n$.
If the Hessenberg matrix
\begin{equation}\label{dh-m}
H=\left[\begin{array}{ccccc}
d_0 & h_0 & 0 & 0 &\cdots\\
d_1 & h_1 & h_0 & 0 &\\
d_2 & h_2 & h_1 & h_0 &\\
\vdots&\vdots&&&\ddots
\end{array}\right]
\end{equation}
is totally positive,
then so is $R$.
\end{thm}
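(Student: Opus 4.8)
The plan is to exhibit a left-multiplicative factorization of $R$ by $H$ that mirrors, on the opposite side, the production-matrix factorization $\widehat{R}=RP$ underlying Theorem~\ref{clw-c}. The starting point is the column recurrence of a Riordan array: the generating function of the $k$th column of $R=\R(d(t),h(t))$ is $d(t)h(t)^k$, so column $k$ is obtained from column $k-1$ by multiplication by $h(t)$, i.e.\ by applying the lower-triangular Toeplitz matrix $T_h=[h_{i-j}]_{i,j\ge0}$, while column $0$ is the coefficient vector of $d(t)$. Since columns $1,2,\dots$ of $H$ are exactly $T_h$ and its column $0$ is the coefficient vector of $d(t)$, a direct check on each column yields the identity
\begin{equation*}
R=H\,\overline{R},\qquad \overline{R}=\begin{bmatrix}1&\mathbf{0}^{\mathsf{T}}\\[2pt]\mathbf{0}&R\end{bmatrix},
\end{equation*}
where $\overline{R}$ is $R$ bordered by a leading $1$. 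Verifying this identity column-by-column (column $0$ gives $He_0=\mathbf{d}$, and for $k\ge1$ the leading zero of column $k$ of $\overline{R}$ turns left multiplication by $H$ into left multiplication by $T_h$) is the key, and I expect the only genuinely delicate, step; everything afterwards is bookkeeping.

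Next I would iterate. Writing $A^{(m)}$ for the matrix obtained from $A$ by bordering $m$ times, so that $A^{(m)}=I_m\oplus A$, the bordering operation is multiplicative, $\overline{AB}=\overline{A}\,\overline{B}$, so applying the bar repeatedly to $R=H\overline{R}$ gives, for every $m\ge1$,
\begin{equation*}
R=H^{(0)}H^{(1)}\cdots H^{(m-1)}\,R^{(m)},\qquad R^{(m)}=I_m\oplus R.
\end{equation*}
Because $R^{(m)}$ acts as the identity on the first $m$ coordinates, the first $m$ columns of $R$ coincide with the first $m$ columns of the finite product $P_m:=H^{(0)}H^{(1)}\cdots H^{(m-1)}$. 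Note that $H$ is lower-Hessenberg, hence row-finite, so each $H^{(j)}$ is row-finite and these products are well defined and associative.

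Finally I would assemble the total positivity. Bordering by a leading $1$ preserves total positivity: a minor meeting the new row but not the new column, or vice versa, vanishes, and otherwise it reduces to a minor of the original matrix. Hence each $H^{(j)}$ is TP by hypothesis. A finite product of row-finite TP matrices is again TP: by the Cauchy--Binet formula any fixed minor of the product expands as a sum of products of minors of the factors, and row-finiteness makes this sum finite, so it is a finite sum of nonnegative terms. Thus $P_m$ is TP for every $m$. Since any given minor of $R$ uses only finitely many columns, choosing $m$ larger than the largest such column index identifies that minor with the corresponding nonnegative minor of $P_m$. Therefore every minor of $R$ is nonnegative, that is, $R$ is totally positive.

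I anticipate the main obstacle to be the discovery and rigorous verification of the self-referential identity $R=H\overline{R}$, together with the care needed to pass from the formal infinite product $H^{(0)}H^{(1)}H^{(2)}\cdots$ to the honest finite factorizations $P_m$ that capture each individual minor; the lower-Hessenberg (row-finite) shape of $H$ is precisely what legitimizes this passage and keeps the Cauchy--Binet expansions finite.
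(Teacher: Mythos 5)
Your proposal is correct and takes essentially the same route as the paper: your column-by-column identity $R=H\overline{R}$ is precisely the paper's key factorization $R[n+1]=H\left[\begin{smallmatrix}1&0\\0&R[n]\end{smallmatrix}\right]$, and your iterated product $R=H^{(0)}H^{(1)}\cdots H^{(m-1)}(I_m\oplus R)$ merely unrolls the paper's induction on the number of columns into an explicit finite factorization. The supporting ingredients---bordering by a leading $1$ preserves total positivity, and Cauchy--Binet for products of row-finite TP matrices---are identical in both arguments.
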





\begin{proof}
Let $R[n]$ be the submatrix consisting of the first $n+1$ columns of $R$.
Clearly, $R$ is TP if and only if all submatrices $R[n]$ are TP for $n\ge 0$.
So it suffices to show that $R[n]$ is TP for all $n\ge 0$.
We proceed by induction on $n$.

Let $R=[r_{n,k}]_{n,k\ge 0}$. 
Since the generating function of the $k$th column of $R$ is $d(x)h^k(x)$,
we have
$$
\left[\begin{array}{c}
r_{0,0} \\
r_{1,0} \\
r_{2,0} \\
\vdots \\
\end{array}\right]=
\left[\begin{array}{c}
d_0 \\
d_1 \\
d_2 \\
\vdots \\
\end{array}\right],\qquad
\left[\begin{array}{c}
r_{0,k} \\
r_{1,k} \\
r_{2,k} \\
\vdots \\
\end{array}\right]=
\left[\begin{array}{ccccc}
h_0 & 0 & 0 &\cdots\\
h_1 & h_0 & 0 &\\
h_2 & h_1 & h_0 &\\
\vdots&&&\ddots
\end{array}\right]
\left[\begin{array}{c}
r_{0,k-1} \\
r_{1,k-1} \\
r_{2,k-1} \\
\vdots \\
\end{array}\right]
$$
for $k\ge 1$.
It follows that
$$
\left[\begin{array}{cccc}
r_{0,0} & r_{0,1} & \cdots & r_{0,n+1} \\
r_{1,0} & r_{1,1} & \cdots & r_{1,n+1} \\
r_{2,0} & r_{2,1} & \cdots & r_{2,n+1} \\
\vdots &  &  & \vdots \\
\end{array}\right]=
\left[\begin{array}{ccccc}
d_0 & h_0 & 0 & 0 &\cdots\\
d_1 & h_1 & h_0 & 0 &\\
d_2 & h_2 & h_1 & h_0 &\\
\vdots&\vdots&&&\ddots
\end{array}\right]
\left[\begin{array}{ccccc}
1 & 0 & 0 & \cdots & 0 \\
0 & r_{0,0} & r_{0,1} & \cdots & r_{0,n} \\
0 & r_{1,0} & r_{1,1} & \cdots & r_{1,n} \\
\vdots & \vdots & & & \vdots \\
\end{array}\right],
$$
or equivalently,
\begin{equation}\label{h-p}
R[n+1]=H
\left[\begin{array}{cc}
1 & 0 \\
0 & R[n] \\
\end{array}\right].
\end{equation}
The first matrix $H$ on the right-hand side of \eqref{h-p} is TP by assumption,
which implies that all $d_n$ are nonnegative,
and the matrix $R[0]$ is therefore TP.
Assume now that the matrix $R[n]$ is TP for $n\ge 0$. 
Then the second matrix
$\left[\begin{array}{cc}
1 & 0 \\
0 & R[n] \\
\end{array}\right]$
on the right-hand side of \eqref{h-p} is also TP.
It is well known that the product of TP matrices is still TP
by the classic Cauchy-Binet formula.
Thus the matrix $R[n+1]$ on the left-hand side of \eqref{h-p} is TP.
The matrix $R$ is therefore TP by induction,
and the proof is complete.
\end{proof}

\begin{exm}
Consider Lucas polynomials $L_n(x)=\sum_kL_{n,k}x^k$ defined by
\begin{equation}\label{lp-r}
L_{n+1}(x)=L_n(x)+xL_{n-1}(x)
\end{equation}
with $L_0(x)=2$ and $L_1(x)=1$.
Lucas matrix is the lower triangular infinite matrix
$$L=[L_{n,k}]_{n,k\ge 0}=
\left[\begin{array}{ccccc}
    2 &  &  &  &  \\
    1 &  &  &  &  \\
    1 & 2 &  &  &  \\
    1 & 3 &  &  &  \\
    1 & 4 & 2 &  &  \\
    1 & 5 & 5 &  &  \\
    1& 6 & 9 & 2& \\
    1& 7 & 14 & 7& \\
    \vdots&&&&\ddots
  \end{array}\right].
$$
Let $\L_k(t)=\sum_{n\ge 0}L_{n,k}t^n$ denote the generating function of the $k$th column of $L$ for $k\ge 0$.
Clearly, $\L_0(t)=(2-t)/(1-t)$.
On the other hand,
we have $L_{n,k}=L_{n-1,k}+L_{n-2,k-1}$ for $n>k>0$ by \eqref{lp-r}.
It follows that $\L_n(t)=\frac{t^2}{1-t}\L_{n-1}(t)$ for $n\ge 1$.
Thus $L$ is a Riordan array:
$$L=\R\left(\frac{2-t}{1-t},\frac{t^2}{1-t}\right).$$
The corresponding Hessenberg matrix is
$$H=\left[\begin{array}{ccccc}
2 & 0 & 0 & 0 &\cdots\\
1 & 0 & 0 & 0 &\\
1 & 1 & 0 & 0 &\\
1 & 1 & 1 & 0 &\\
1 & 1 & 1 & 1 &\\
\vdots&\vdots&&&\ddots
\end{array}\right],$$
which is clearly TP, and so is $L$ by Theorem \ref{mmw-c}.

However, the total positivity of $L$ can be followed neither from Theorem \ref{cw-c}
since $d(t)=(2-t)/(1-t)$ is not PF,
nor from Theorem \ref{clw-c} since $L$ is improper.
\end{exm}

\begin{rem}
We can show that Theorem \ref{mmw-c} implies Theorem \ref{cw-c}.
Consider first several important classes of proper Riordan arrays $R=\R(d(t),h(t))$.
\begin{enumerate}[\rm (i)]
\item 
Let $h(t)=t$. Then $R$ is a Toeplitz-type Riordan array,
which is precisely the Toeplitz matrix of the sequence $(d_n)_{n\ge 0}$.
If $d(t)$ is PF, then $\R(d(t),t)$ is TP.
\item 
Let $h(t)=td(t)$. Then $R$ is a Bell-type Riordan array.
In this case,
the corresponding Hessenberg matrix \eqref{dh-m} is
the Toeplitz matrix of $(d_n)_{n\ge 0}$.
If $d(t)$ is PF, i.e., $h(t)$ is PF,
then $\R(h(t)/t,h(t))$ is TP by Theorem \ref{mmw-c}.
\item 
Let $d(t)=1$. Then $R$ is a Lagrange-type Riordan array.
Note that
\begin{equation*}\label{lb-r}
\R(1,h(t))=
\left[\begin{array}{cc}
1 & 0 \\
0 & \R\left(h(t)/t,h(t)\right) \\
\end{array}\right].
\end{equation*}
If $h(t)$ is PF, then $\R(h(t)/t,h(t))$ is TP,
and so is $\R(1,h(t))$.
\end{enumerate}

It is well known \cite{SGWW91}
that every proper Riordan array can be decomposed into
the product of a Toeplitz-type Riordan array
and a Lagrange-type Riordan array:
\begin{equation*}\label{tl-p}
\R(d(t),h(t))=\R(d(t),t)\cdot\R(1,h(t)).
\end{equation*}
We conclude that if both $d(t)$ and $h(t)$ are PF,
then $R$ is TP.
In other words, Theorem \ref{cw-c} follows from Theorem \ref{mmw-c}.
\end{rem}

\section*{Acknowledgement}

This work was supported in part by the National Natural Science Foundation of China
(Grant Nos. 11771065, 11701249).

\section*{References}

\end{document}